\newcommand{\sysn}{\left\{\begin{array}{rcl}}
\newcommand{\sysk}{\end{array}\right.}
\newtheorem{theorem}{Theorem}[section]
\theoremstyle{example}
\theoremstyle{definition}
\journal{...}
\begin{document}

\begin{frontmatter}

\title{On the different kinds of
separability of the space of Borel functions}


\author{Alexander V. Osipov}

\ead{OAB@list.ru}


\address{Krasovskii Institute of Mathematics and Mechanics, Ural Federal
 University, Ural State University of Economics, 620219, Yekaterinburg, Russia}

\begin{abstract} In paper we proved that:

$\bullet$ a space of Borel functions $B(X)$ on a set of reals $X$,
with pointwise topology, to be countably selective sequentially
separable if and only if $X$ has the property $S_{1}(B_{\Gamma},
B_{\Gamma})$;

$\bullet$ there exists a consistent example of sequentially
separable selectively separable space which is not selective
sequentially separable. This is an answer to the question of A.
Bella, M. Bonanzinga and M. Matveev;

$\bullet$ there is a consistent example of a compact $T_2$
sequentially separable space which is not selective sequentially
separable. This is an answer to the question of  A. Bella and C.
Costantini;

$\bullet$  $\min\{\mathfrak{b},\mathfrak{q}\}=\{\kappa:
2^{\kappa}$ is not selective sequentially separable$\}$. This is a
partial answer to the question of A. Bella, M. Bonanzinga and M.
Matveev.


\end{abstract}

\begin{keyword}
 $S_{1}(\mathcal{D},\mathcal{D})$  \sep $S_{1}(\mathcal{S},\mathcal{S})$ \sep
$S_{fin}(\mathcal{S},\mathcal{S})$ \sep function spaces \sep
selection principles  \sep Borel function \sep $\sigma$-set \sep
$S_1(B_\Omega,B_\Omega)$ \sep $S_1(B_{\Gamma},B_{\Gamma})$ \sep
$S_1(B_{\Omega},B_{\Gamma})$ \sep sequentially separable \sep
selectively separable \sep selective sequentially separable \sep
countably selective sequentially separable


\MSC[2010]  54C35 \sep 54C05 \sep 54C65   \sep 54A20

\end{keyword}

\end{frontmatter}



\section{Introduction}

 In \cite{ospy}, Osipov and Pytkeev
gave necessary and sufficient conditions for the space $B_1(X)$ of
first Baire class functions on a Tychonoff space $X$, with
pointwise topology, to be (strongly) sequentially
 separable. In this paper we consider some properties of a space $B(X)$ of
Borel functions  on a set of reals $X$, with pointwise topology,
that are stronger than (sequentially) separability.

\section{Main definitions and notation}

Many topological properties are defined or characterized in terms
 of the following classical selection principles.
 Let $\mathcal{A}$ and $\mathcal{B}$ be sets consisting of
families of subsets of an infinite set $X$. Then:

$S_{1}(\mathcal{A},\mathcal{B})$ is the selection hypothesis: for
each sequence $(A_{n}: n\in \mathbb{N})$ of elements of
$\mathcal{A}$ there is a sequence $(b_{n}: n\in\mathbb{N})$ such
that for each $n$, $b_{n}\in A_{n}$, and $\{b_{n}: n\in\mathbb{N}
\}$ is an element of $\mathcal{B}$.

$S_{fin}(\mathcal{A},\mathcal{B})$ is the selection hypothesis:
for each sequence $(A_{n}: n\in \mathbb{N})$ of elements of
$\mathcal{A}$ there is a sequence $(B_{n}: n\in\mathbb{N})$ of
finite sets such that for each $n$, $B_{n}\subseteq A_{n}$, and
$\bigcup_{n\in\mathbb{N}}B_{n}\in\mathcal{B}$.

$U_{fin}(\mathcal{A},\mathcal{B})$ is the selection hypothesis:
whenever $\mathcal{U}_1$, $\mathcal{U}_2, ... \in \mathcal{A}$ and
none contains a finite subcover, there are finite sets
$\mathcal{F}_n\subseteq \mathcal{U}_n$, $n\in \mathbb{N}$, such
that $\{\bigcup \mathcal{F}_n : n\in \mathbb{N}\}\in \mathcal{B}$.

\medskip

 An open cover $\mathcal{U}$ of a space $X$ is:

 $\bullet$ an {\it $\omega$-cover} if $X$ does not belong to
 $\mathcal{U}$ and every finite subset of $X$ is contained in a
 member of $\mathcal{U}$.

$\bullet$ a {\it $\gamma$-cover} if it is infinite and each $x\in
X$ belongs to all but finitely many elements of $\mathcal{U}$.

For a topological space $X$ we denote:

$\bullet$ $\Omega$ --- the family of all countable open
$\omega$-covers of $X$;

$\bullet$ $\Gamma$ --- the family of all countable open
$\gamma$-covers of $X$;

$\bullet$ $B_{\Omega}$ --- the family of all countable Borel
$\omega$-covers of $X$;

$\bullet$ $B_\Gamma$ --- the family of all countable Borel
$\gamma$-covers of $X$;

$\bullet$ $F_\Gamma$ --- the family of all countable closed
$\gamma$-covers of $X$;

$\bullet$ $\mathcal{D}$ --- the family of all countable dense
subsets of $X$;

$\bullet$ $\mathcal{S}$ --- the family of all countable
sequentially dense subsets of $X$.

\medskip

A $\gamma$-cover $\mathcal{U}$ of co-zero sets of $X$  is {\it
$\gamma_F$-shrinkable} if there exists a $\gamma$-cover $\{F(U) :
U\in \mathcal{U}\}$ of zero-sets of $X$ with $F(U)\subset U$ for
every $U\in \mathcal{U}$.

For a topological space $X$ we denote:

$\bullet$ $\Gamma_F$ --- the family of all countable
$\gamma_F$-shrinkable $\gamma$-covers of $X$.

\medskip

 We will use the following notations.

$\bullet$  $C_{p}(X)$ is the set of all real-valued continuous
functions $C(X)$ defined  on a space $X$, with pointwise topology.

$\bullet$ $B_1(X)$ is the set of all first Baire class
 functions $B_1(X)$ i.e., pointwise limits of continuous functions, defined
 on a space $X$,  with pointwise topology.

$\bullet$  $B(X)$ is the  set of all Borel functions, defined on a
space $X$,  with pointwise topology.

\medskip

  If $X$ is a space and $A\subseteq X$, then the sequential closure of $A$,
 denoted by $[A]_{seq}$, is the set of all limits of sequences
 from $A$. A set $D\subseteq X$ is said to be sequentially dense
 if $X=[D]_{seq}$. If $D$ is a countable sequentially dense subset
 of $X$ then $X$ call {\it sequentially separable} space.

 Call $X$ {\it strongly sequentially separable} if $X$ is separable and
 every countable dense subset of $X$ is sequentially dense.

\medskip
A space $X$ is {\it $($countably$)$ selectively separable} (or
$M$-separable, \cite{bbm}) if for every sequence $(D_n: n\in
\mathbb{N})$ of (countable) dense subsets of $X$ one can pick
finite $F_n\subset D_n$, $n\in \mathbb{N}$, so that $\bigcup
\{F_n: n\in \mathbb{N}\}$ is dense in $X$.

In \cite{bbm} the authors started to investigate a selective
version of sequential separability.

A space $X$ is {\it  $($countably$)$ selectively sequentially
separable} (or $M$-sequentially separable, \cite{bbm}) if for
every sequence $(D_n: n\in \mathbb{N})$ of (countable)
sequentially dense subsets of $X$, one can pick finite $F_n\subset
D_n$, $n\in \mathbb{N}$, so that $\bigcup \{F_n: n\in
\mathbb{N}\}$ is sequentially dense in $X$.

In Scheeper's terminology \cite{sch} countably selectively
separability equivalently to the selection principle
$S_{fin}(\mathcal{D},\mathcal{D})$, and countably selective
sequentially separability equivalently to the
$S_{fin}(\mathcal{S},\mathcal{S})$.

\medskip

Recall that the cardinal $\mathfrak{p}$ is the smallest cardinal
so that there is a collection of $\mathfrak{p}$ many subsets of
the natural numbers with the strong finite intersection property
but no infinite pseudo-intersection. Note that $\omega_1 \leq
\mathfrak{p} \leq \mathfrak{c}$.

For $f,g\in \mathbb{N}^{\mathbb{N}}$, let $f\leq^{*} g$ if
$f(n)\leq g(n)$ for all but finitely many $n$. $\mathfrak{b}$ is
the minimal cardinality of a $\leq^{*}$-unbounded subset of
$\mathbb{N}^{\mathbb{N}}$. A set $B\subset [\mathbb{N}]^{\infty}$
is unbounded if the set of all increasing enumerations of elements
of $B$ is unbounded in $\mathbb{N}^{\mathbb{N}}$, with respect to
$\leq^{*}$. It follows that $|B|\geq \mathfrak{b}$. A subset $S$
of the real line is called a $Q$-set if each one of its subsets is
a $G_{\delta}$. The cardinal $\mathfrak{q}$ is the smallest
cardinal so that for any $\kappa< \mathfrak{q}$ there is a $Q$-set
of size $\kappa$. (See \cite{do} for more on small cardinals
including $\mathfrak{p}$).

\section{Properties of a space of Borel functions}

\begin{theorem}\label{th1} For a set of reals $X$, the following statements are
equivalent:

\begin{enumerate}

\item  $B(X)$ satisfies $S_{1}(\mathcal{S},\mathcal{S})$ and
$B(X)$ is sequentially separable;

\item $X$ satisfies $S_{1}(B_{\Gamma}, B_{\Gamma})$;

\item $B(X)\in S_{fin}(\mathcal{S},\mathcal{S})$ and $B(X)$ is
sequentially separable;

\item $X$ satisfies $S_{fin}(B_{\Gamma}, B_{\Gamma})$;

\item $B_1(X)$ satisfies $S_{1}(\mathcal{S},\mathcal{S})$;

\item $X$ satisfies $S_{1}(F_{\Gamma}, F_{\Gamma})$;

\item $B_1(X)$ satisfies $S_{fin}(\mathcal{S},\mathcal{S})$.

\end{enumerate}

\end{theorem}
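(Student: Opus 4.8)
The plan is to prove the seven conditions equivalent by setting up a dictionary between sequential convergence in the function spaces $B(X)$, $B_1(X)$ and the combinatorics of countable $\gamma$-covers of $X$, and then closing the scheme with a collapse of finite selections to single selections. The dictionary will pair the statements as $(1)\Leftrightarrow(2)$, $(3)\Leftrightarrow(4)$ (governed by Borel $\gamma$-covers), and $(5)\Leftrightarrow(6)$, $(7)\Leftrightarrow(6)$ (governed by closed $\gamma$-covers); everything is then reduced to the purely cover-theoretic chain linking $S_1(B_\Gamma,B_\Gamma)$, $S_{fin}(B_\Gamma,B_\Gamma)$ and $S_1(F_\Gamma,F_\Gamma)$.

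The heart of the matter is a duality lemma in the spirit of the Gerlits--Nagy/Sakai correspondence for $C_p(X)$. First I would fix the constant functions as base points and, to a countable Borel $\gamma$-cover $\{B_n\}$ of $X$, associate Borel functions $f_n$ vanishing on $B_n$; since each $x\in X$ lies in all but finitely many $B_n$, one gets $f_n\to 0$ pointwise. Running this construction over a fixed countable algebra of rational-valued simple Borel functions produces a canonical countable sequentially dense subset of $B(X)$ and, more importantly, shows that a countable $D\subseteq B(X)$ is sequentially dense precisely when the associated families of Borel level sets are $\gamma$-covers. Under this translation, choosing one function from each of countably many sequentially dense sets corresponds exactly to choosing one Borel set from each of countably many Borel $\gamma$-covers, so that $S_1(\mathcal{S},\mathcal{S})$ together with sequential separability of $B(X)$ becomes $S_1(B_\Gamma,B_\Gamma)$, yielding $(1)\Leftrightarrow(2)$; the finite-selection form gives $(3)\Leftrightarrow(4)$ verbatim. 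The care here goes into checking that the vanishing functions can be drawn from a single countable set and that it is sequential, not merely topological, density that the $\gamma$-cover condition encodes.

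Next I would repeat the construction for $B_1(X)$. The only change is that first Baire class functions are controlled by their $F_\sigma$/closed level structure, so the relevant cover class is that of closed $\gamma$-covers (equivalently zero-set $\gamma$-covers, since a set of reals is perfectly normal). The same dictionary, now with closed sets replacing arbitrary Borel sets, gives $(5)\Leftrightarrow(6)$ and its finite-selection form $(7)\Leftrightarrow(6)$. At this step I would invoke the Osipov--Pytkeev analysis of sequential separability of $B_1(X)$ cited in the introduction \cite{ospy} to guarantee that the canonical countable sequentially dense set exists in the first-Baire-class setting.

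With the function-space statements all pushed onto the cover side, what remains is
\[ S_1(F_\Gamma,F_\Gamma)\ \Longleftrightarrow\ S_{fin}(B_\Gamma,B_\Gamma)\ \Longleftrightarrow\ S_1(B_\Gamma,B_\Gamma). \]
The implications $S_1\Rightarrow S_{fin}$ are immediate, and the passage between the closed and Borel cover classes is handled by refinement, a closed $\gamma$-cover being Borel and a Borel $\gamma$-cover of a set of reals admitting a closed shrinking. I expect the genuinely delicate point, and the main obstacle, to be the collapse $S_{fin}(B_\Gamma,B_\Gamma)\Rightarrow S_1(B_\Gamma,B_\Gamma)$: from finite subfamilies $F_n$ whose union is a $\gamma$-cover one must recover a \emph{single} member of each original cover so that the resulting family is still a $\gamma$-cover. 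The available leverage is that Borel (resp. closed) families are closed under finite unions, that the union of a finite subfamily of a $\gamma$-cover inherits the ``all but finitely many'' property, and that every infinite subfamily of a $\gamma$-cover is again a $\gamma$-cover; combining these with a re-grouping of the index set converts finite selections into single ones. This diagonalization is the $\gamma$-analog of the Just--Miller--Scheepers--Szeptycki selection collapse, and verifying that it respects membership in the originally given covers is the part of the proof that will require the most care, since it is exactly where the $\gamma$- (rather than $\omega$-) structure must be used.
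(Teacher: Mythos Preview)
Your outline handles the ``function space $\Rightarrow$ cover'' directions correctly, and this matches the paper: from Borel $\gamma$-covers $\mathcal{F}_i$ one manufactures sequentially dense sets $\mathcal{S}_i$ as you describe, applies the selection principle, and reads off a $\gamma$-cover from any sequence converging to $\mathbf{0}$. The collapse $S_{fin}(B_\Gamma,B_\Gamma)\Rightarrow S_1(B_\Gamma,B_\Gamma)$ is likewise fine; the paper simply quotes Scheepers--Tsaban.

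The genuine gap is in the reverse direction, $(2)\Rightarrow(1)$, where your ``dictionary'' breaks down. You assert that a countable $D\subseteq B(X)$ is sequentially dense precisely when certain associated families of Borel level sets are $\gamma$-covers, and that selecting one function from each sequentially dense set corresponds to selecting one set from each $\gamma$-cover. But sequential density of $D$ is the statement that \emph{every} $f\in B(X)$ is a sequential limit from $D$; there are continuum many such $f$, and no countable family of $\gamma$-covers can encode this. If you try to route through a fixed countable sequentially dense $S=\{d_n\}$, a single application of $S_1(B_\Gamma,B_\Gamma)$ to countably many $\gamma$-covers only guarantees that the \emph{global} selection is a $\gamma$-cover, not that the subselection associated to a given $d_n$ still converges to $d_n$; and even if you arranged $S\subseteq[B]_{seq}$, you would still need $[\,[B]_{seq}\,]_{seq}=B(X)$, which is not automatic in $B(X)$. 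The paper confronts exactly these two obstacles with external machinery: it re-topologizes $X$ so that all functions in $S\cup\bigcup S_i$ become continuous, checks that the new space $Y$ inherits $S_1(B_\Gamma,B_\Gamma)$ (via the QN-space characterization of Bukovsk\'y--\v{S}upina and Tsaban--Zdomskyy), uses the resulting $\alpha_2$ property of $B(Y)$ to do the per-$d_n$ diagonal selection, and then invokes the QN diagonalization once more to pass from $S\subseteq[B]_{seq}$ to $B(X)=[B]_{seq}$. None of this is visible in your plan.

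A second gap is your claim that a Borel $\gamma$-cover of a set of reals admits a closed shrinking, used to pass between $S_1(B_\Gamma,B_\Gamma)$ and $S_1(F_\Gamma,F_\Gamma)$. This is false for arbitrary $X\subseteq\mathbb{R}$; it is precisely the assertion that $X$ is a $\sigma$-set. The paper obtains the $\sigma$-set property (and hence $B_1(X)=B(X)$) as a \emph{consequence} of $S_1(B_\Gamma,B_\Gamma)$, quoting Tsaban--Zdomskyy, and only then identifies the closed-cover and Borel-cover selection principles. Your refinement argument needs this input.
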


\begin{proof}
It is obviously that $(1)\Rightarrow(3)$.

 $(2)\Leftrightarrow(4)$. By Theorem 1 in \cite{scts},  $U_{fin}(B_{\Gamma},
B_{\Gamma})=S_1(B_{\Gamma}, B_{\Gamma})=S_{fin}(B_{\Gamma},
B_{\Gamma})$.

$(3)\Rightarrow(2)$. Let $\{\mathcal{F}_i\}\subset B_\Gamma$ and
$\mathcal{S}=\{h_m\}_{m\in \mathbb{N}}$ be a countable
sequentially dense subset of $B(X)$. For each $i\in \mathbb{N}$ we
consider a countable sequentially dense subset $\mathcal{S}_i$ of
$B(X)$ and $\mathcal{F}_i=\{ F^{m}_i\}_{m\in \mathbb{N}}$ where

$\mathcal{S}_i =\{f^m_i\}:=\{ f^m_i\in B(X) : f^m_i\upharpoonright
F^{m}_i=h_m$ and $f^m_i\upharpoonright (X\setminus F^{m}_i)=1$ for
$m \in \mathbb{N} \}$.

Since $\mathcal{F}_i=\{F^{m}_i\}_{m\in \mathbb{N}}$ is a Borel
$\gamma$-cover of $X$ and $\mathcal{S}$ is a countable
sequentially dense subset of $B(X)$, we have that $\mathcal{S}_i$
is a  countable sequentially dense subset of $B(X)$ for each $i\in
\mathbb{N}$. Indeed, let $h\in B(X)$, there is a sequence
$\{h_s\}_{s\in \mathbb{N}}\subset \mathcal{S}$ such that
$\{h_s\}_{s\in \mathbb{N}}$ converge to $h$. We claim that
$\{f^s_i\}_{s\in \mathbb{N}}$ converge to $h$.  Let
$K=\{x_1,...,x_k\}$ be a finite subset of $X$, $\epsilon>0$ and
let $W=\langle h, K,\epsilon \rangle:=\{g\in B(X):
|g(x_j)-h(x_j)|<\epsilon$ for $j=1,...,k\}$ be a base neighborhood
of $h$, then there is $m_0\in \mathbb{N}$ such that $K\subset
F^{m}_i $ for each $m>m_0$ and $h_s\in W$ for each $s>m_0$. Since
$f^s_i\upharpoonright K= h_s\upharpoonright K$ for every $s>m_0$,
$f^s_i\in W$ for every $s>m_0$. It follows that $\{f^s_i\}_{s\in
\mathbb{N}}$ converge to $h$.

 Since $B(X)$ satisfies $S_{fin}(\mathcal{S},\mathcal{S})$,  there is a sequence $(F_i=\{f^{m_1}_{i},..., f^{m_{s(i)}}_{i}\} : i\in\mathbb{N})$ such
that for each $i$, $F_i \subset \mathcal{S}_i$, and
$\bigcup_{i\in\mathbb{N}}F_{i}$ is a countable sequentially dense
subset of $B(X)$.

For ${\bf 0}\in B(X)$ there is a sequence
$\{f^{m_{s(i_j)}}_{i_j}\}_{j\in \mathbb{N}}\subset
\bigcup_{i\in\mathbb{N}}F_{i}$ such that
$\{f^{m_{s(i_j)}}_{i_j}\}_{j\in \mathbb{N}}$ converge to ${\bf
0}$. Consider a sequence $(F^{m_{s(i_j)}}_{i_j}: j\in
\mathbb{N})$.

(1) $F^{m_{s(i_j)}}_{i_j}\in \mathcal{F}_{i_j}$.

(2) $\{F^{m_{s(i_j)}}_{i_j}: j\in \mathbb{N}\}$ is a
$\gamma$-cover of $X$.

Indeed, let $K$ be a finite subset of $X$ and $U=\langle {\bf 0},
K, \frac{1}{2} \rangle$ be a base neighborhood of ${\bf 0}$, then
there is $j_0\in \mathbb{N}$ such that $f^{m_{s(i_j)}}_{i_j}\in U$
for every $j>j_0$. It follows that $K\subset F^{m_{s(i_j)}}_{i_j}$
for every $j>j_0$. We thus get $X$ satisfies $U_{fin}(B_{\Gamma},
B_{\Gamma})$, and, hence, by Theorem 1 in \cite{scts},  $X$
satisfies $S_{1}(B_{\Gamma}, B_{\Gamma})$.

 $(2)\Rightarrow(1)$.  Let
$\{S_i\}\subset \mathcal{S}$ and $S=\{d_n: n\in \mathbb{N}\} \in
\mathcal{S}$. Consider the topology $\tau$ generated by the family
 $\mathcal{P}=\{f^{-1}(G): G$ is an open set of $\mathbb{R}$ and
 $f\in S\cup \bigcup\limits_{i\in \mathbb{N}} S_i \}$.
 Since $P=S\cup \bigcup\limits_{i\in \mathbb{N}} S_i$ is a countable dense subset
 of $B(X)$ and $X$ is Tychonoff, we have that the space $Y=(X,\tau)$ is a separable metrizable
 space. Note that a function $f\in P$, considered as mapping from
 $Y$ to $\mathbb{R}$, is a continuous function i.e. $f\in C(Y)$ for each $f\in P$. Note also that an identity
 map $\varphi$ from $X$ on $Y$, is a Borel bijection. By Corollary
 12 in \cite{busu}, $Y$ is a $QN$-space and, hence, by Corollary
 20 in \cite{tszd}, $Y$ has the property $S_{1}(B_{\Gamma},
 B_{\Gamma})$. By Corollary 21 in \cite{tszd}, $B(Y)$ is an $\alpha_2$ space.


Let $q:\mathbb{N} \mapsto \mathbb{N}\times \mathbb{N}$ be a
bijection. Then we enumerate $\{S_i\}_{i\in \mathbb{N}}$ as
$\{S_{q(i)}\}_{q(i)\in \mathbb{N}\times \mathbb{N}}$. For each
$d_n\in S$ there are sequences $s_{n,m}\subset S_{n,m}$ such that
$s_{n,m}$ converges to $d_n$ for each $m\in \mathbb{N}$. Since
$B(Y)$ is an $\alpha_2$ space, there is $\{b_{n,m}:
m\in\mathbb{N}\}$ such that for each $m$, $b_{n,m}\in s_{n,m}$,
and, $b_{n,m}\rightarrow d_n$ $(m\rightarrow \infty)$. Let
$B=\{b_{n,m} : n,m\in \mathbb{N} \}$. Note that $S\subset
[B]_{seq}$.

Since $X$ is a $\sigma$-set (that is, each Borel subset of $X$ is
$F_{\sigma}$)(see \cite{tszd}),  $B_1(X)=B(X)$ and
$\varphi(B(Y))=\varphi(B_1(Y))\subseteq B(X)$ where
$\varphi(B(Y)):=\{p\circ \varphi : p\in B(Y)\}$ and
$\varphi(B_1(Y)):=\{p\circ \varphi : p\in B_1(Y)\}$.

Since $S$ is a countable sequentially dense subset of $B(X)$, for
any $g\in B(X)$ there is a sequence $\{g_n\}_{n\in
\mathbb{N}}\subset S$ such that $\{g_n\}_{n\in \mathbb{N}}$
converge to $g$. But $g$ we can consider as a mapping from $Y$
into $\mathbb{R}$ and a set $\{g_n : n\in \mathbb{N}\}$ as subset
of $C(Y)$. It follows that $g\in B_1(Y)$. We get that
$\varphi(B(Y))=B(X)$.

We claim that $B\in \mathcal{S}$, i.e. that $[B]_{seq}=B(X)$. Let
$f\in B(Y)$ and $\{f_k: k\in \mathbb{N}\}\subset S$ such that $f_k
\rightarrow f$ ($k\rightarrow \infty$). For each $k\in \mathbb{N}$
there is $\{f^{n}_k : n\in \mathbb{N} \}\subset B$ such that
$f^{n}_k \rightarrow f_k$ ($n\rightarrow \infty$). Since $Y$ is a
$QN$-space (Theorem 16 in \cite{busu}), there exists an unbounded
$\beta\in \mathbb{N}^{\mathbb{N}}$ such that
$\{f^{\beta(k)}_{k}\}$ converge to $f$ on $Y$. It follows that
$\{f^{\beta(k)}_{k} : k\in \mathbb{N}\}$ converge to $f$ on $X$
and $[B]_{seq}=B(X)$.

 $(5)\Rightarrow(6)$. By Velichko's Theorem (\cite{vel}), a space $B_1(X)$ is sequentially
separable for any separable metric space $X$.

 Let $\{\mathcal{F}_i\}\subset F_\Gamma$ and
$\mathcal{S}=\{h_m\}_{m\in \mathbb{N}}$ be a countable
sequentially dense subset of $B_1(X)$.

Similarly implication $(3)\Rightarrow(2)$ we get $X$ satisfies
$U_{fin}(F_{\Gamma}, F_{\Gamma})$, and, hence, by Lemma 13 in
\cite{tszd}, $X$ satisfies $S_{1}(F_{\Gamma}, F_{\Gamma})$.

$(6)\Rightarrow(5)$. By Corollary 20 in \cite{tszd}, $X$ satisfies
$S_{1}(B_{\Gamma}, B_{\Gamma})$. Since $X$ is a $\sigma$-set (see
\cite{tszd}),  $B_1(X)=B(X)$ and, by implication
$(2)\Rightarrow(1)$, we get  $B_1(X)$ satisfies
$S_{1}(\mathcal{S},\mathcal{S})$.

\medskip

\end{proof}

 In \cite{sch} (Theorem 13) M. Scheepers proved the following result

\begin{theorem}\label{th21} (Scheepers) For $X$ a separable metric space, the
following are equivalent:

\begin{enumerate}

\item $C_p(X)$ satisfies $S_{1}(\mathcal{D},\mathcal{D})$;

\item $X$ satisfies $S_{1}(\Omega, \Omega)$.

\end{enumerate}

\end{theorem}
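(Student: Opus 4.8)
The plan is to prove the two implications separately by building an explicit dictionary between countable dense subsets of $C_p(X)$ and countable open $\omega$-covers of $X$, exploiting that a separable metric space is normal and that every open set is the increasing union of closed sets. Throughout I use that $X$ separable metric makes $C_p(X)$ separable; I would fix a countable dense $R=\{r_l:l\in\mathbb{N}\}\subseteq C_p(X)$ and write the basic neighbourhood of $f$ determined by finite $F\subseteq X$ and $\epsilon>0$ as $\langle f,F,\epsilon\rangle=\{g:|g(x)-f(x)|<\epsilon \text{ for } x\in F\}$.

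For $(2)\Rightarrow(1)$, I would start from countable dense sets $(D_n)$ and a countable dense $\{h_m\}\subseteq C_p(X)$, split $\mathbb{N}$ into infinitely many infinite blocks $N_{m,j}$ indexed by pairs $(m,j)$, and on the block $N_{m,j}$ translate the density of each $D_n$ into the open $\omega$-cover $\mathcal{V}_n=\{\,\{x:|g(x)-h_m(x)|<1/j\}:g\in D_n\,\}$ of $X$. Density of $D_n$ is exactly the statement that every finite $F$ lies in some member, so $\mathcal{V}_n\in\Omega$ (discarding the trivial case where a member equals $X$). Applying $S_1(\Omega,\Omega)$ to $(\mathcal{V}_n)_{n\in N_{m,j}}$ selects $g_n\in D_n$ whose associated sets form an $\omega$-cover, i.e. every finite $F$ satisfies $|g_n-h_m|<1/j$ on $F$ for some $n\in N_{m,j}$. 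Collecting the $g_n$ over all blocks and approximating an arbitrary target $f$ first by some $h_m$ within $\epsilon/2$ on $F$ and then by the block with $1/j\le\epsilon/2$ shows $\{g_n\}$ is dense, giving $S_1(\mathcal{D},\mathcal{D})$.

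For $(1)\Rightarrow(2)$, the decisive idea is an asymmetric test family that lets one read an $\omega$-cover back off the selected functions. Given countable open $\omega$-covers $\mathcal{U}_n=\{U^n_k:k\in\mathbb{N}\}$, I would write $U^n_k=\bigcup_j F^{n,k}_j$ with $F^{n,k}_j$ closed and increasing, and by the Tietze theorem choose continuous $g^n_{k,j,l}$ equal to $r_l$ on $F^{n,k}_j$ and equal to the constant $1$ on $X\setminus U^n_k$; put $D_n=\{g^n_{k,j,l}:k,j,l\in\mathbb{N}\}$. This $D_n$ is a countable dense subset: to hit $\langle f,F,\epsilon\rangle$ choose $r_l$ close to $f$ on $F$, then $U^n_k\supseteq F$, then $j$ with $F\subseteq F^{n,k}_j$. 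Feeding $(D_n)$ to $S_1(\mathcal{D},\mathcal{D})$ yields $g_n=g^n_{k_n,j_n,l_n}$ with $\{g_n\}$ dense; I then map $g_n$ to $U^n_{k_n}\in\mathcal{U}_n$. Testing density against the zero function performs the recovery: for each finite $F$ some $g_n$ lies in $\langle \mathbf{0},F,1/2\rangle$, and since $g_n\equiv 1$ off $U^n_{k_n}$ this forces $F\subseteq U^n_{k_n}$; hence $\{U^n_{k_n}\}$ is an $\omega$-cover and $X$ satisfies $S_1(\Omega,\Omega)$.

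The main obstacle, and the part needing the most care, is the design of this bridging family in $(1)\Rightarrow(2)$: the functions must simultaneously be dense (so that $S_1(\mathcal{D},\mathcal{D})$ applies, and so that they stay countable, which is why I prescribe values on the closed exhaustion $F^{n,k}_j$ through the fixed dense $R$) and be decodable (so that the selected functions determine cover elements). The trick that makes decoding possible is the asymmetry of pinning each test function to the constant $1$ off its cover set while probing convergence to $\mathbf{0}$; this converts ``$g_n$ is small on $F$'' into ``$F\subseteq U^n_{k_n}$'', and it is the step where normality/Tietze and the gap between the values $0$ and $1$ are essential. The remaining work---verifying the $\omega$-cover bookkeeping, splitting $\mathbb{N}$ into infinitely many infinite blocks in $(2)\Rightarrow(1)$, and disposing of the degenerate case where a constructed family contains $X$ itself---is routine.
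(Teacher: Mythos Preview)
The paper does not give its own proof of this theorem: it is stated as a known result of Scheepers (Theorem~13 in \cite{sch}) and is invoked only as a black box in the proof of Theorem~\ref{th22}. There is therefore no in-paper argument to compare against.

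On its own merits your argument is correct and is essentially the standard proof. A couple of small points worth tightening: in $(1)\Rightarrow(2)$, the ``Tietze'' step really uses that $F^{n,k}_j$ and $X\setminus U^n_k$ are disjoint closed sets in a normal space, so the function equal to $r_l$ on the first and $1$ on the second is continuous on their (closed) union and then extends by Tietze; and in $(2)\Rightarrow(1)$, applying $S_1(\Omega,\Omega)$ separately on each infinite block $N_{m,j}$ is legitimate because each block, reindexed, is itself a sequence of $\omega$-covers. Your handling of the degenerate case (some set in $\mathcal{V}_n$ equals $X$) is fine: select that $g$ directly for those $n$ and run the selection principle on the rest.

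It may interest you that the paper's proof of the Borel analogue (Theorem~\ref{th22}) uses the same dense-set-to-$\omega$-cover translation you use for $(2)\Rightarrow(1)$, but for the reverse direction it changes the topology on $X$ so that the given Borel covers become open and then quotes Theorem~\ref{th21}, rather than building the asymmetric test family directly as you do. Your direct construction is the natural one in the continuous case, where Tietze/Urysohn are available; the paper's detour is forced in the Borel setting precisely because there is no Tietze-type extension for Borel functions along Borel sets.
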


We claim the theorem for a space $B(X)$ of Borel functions.

\begin{theorem}\label{th22} For a set of reals $X$, the
following are equivalent:

\begin{enumerate}

\item $B(X)$ satisfies $S_{1}(\mathcal{D},\mathcal{D})$;

\item $X$ satisfies $S_{1}(B_\Omega, B_\Omega)$.

\end{enumerate}

\end{theorem}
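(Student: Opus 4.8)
The plan is to prove Theorem \ref{th22} by setting up a dictionary between countable dense subsets of $B(X)$ and countable Borel $\omega$-covers of $X$, parallel to the correspondence underlying Scheepers' Theorem \ref{th21}, but with open covers replaced by Borel covers and $C(X)$ by $B(X)$. The two pillars of this dictionary are: (a) from a Borel $\omega$-cover one manufactures a dense set by gluing a fixed dense set onto cover elements, exactly as in the proof of $(3)\Rightarrow(2)$ of Theorem \ref{th1}; and (b) from a dense set one reads off Borel $\omega$-covers via sublevel sets $\{x : |g(x)-h(x)|<1/k\}$. Throughout I use that $B(X)$ is separable (as $C_p(X)$ is dense in $\mathbb{R}^X\supseteq B(X)$), fixing a countable dense set $S=\{h_m : m\in\mathbb N\}\subseteq B(X)$, and that multiplying Borel functions by indicators of Borel sets yields Borel functions, so every construction stays inside $B(X)$.

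For $(1)\Rightarrow(2)$, given Borel $\omega$-covers $\mathcal U_i=\{U^n_i : n\in\mathbb N\}$, I would put
$$D_i=\{f\in B(X) : f\upharpoonright U^n_i=h_m\upharpoonright U^n_i \text{ and } f\upharpoonright(X\setminus U^n_i)=1,\ m,n\in\mathbb N\}.$$
Each $D_i$ is a countable dense subset of $B(X)$: for a basic neighbourhood $\langle f_0,K,\epsilon\rangle$ one chooses $n$ with $K\subseteq U^n_i$ (possible since $\mathcal U_i$ is an $\omega$-cover) and then $h_m$ that is $\epsilon$-close to $f_0$ on $K$. Applying $S_1(\mathcal D,\mathcal D)$ yields $f_i\in D_i$, associated with some $U^{n(i)}_i$, so that $\{f_i : i\in\mathbb N\}$ is dense. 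Then $\{U^{n(i)}_i : i\in\mathbb N\}$ is the required Borel $\omega$-cover: given finite $K$, density provides $i$ with $f_i\in\langle \mathbf 0,K,\frac{1}{2}\rangle$, hence $|f_i(x)|<\frac{1}{2}$ for $x\in K$; since $f_i\equiv 1$ off $U^{n(i)}_i$, no point of $K$ can lie outside $U^{n(i)}_i$, so $K\subseteq U^{n(i)}_i$. As the selected sets come from the $\mathcal U_i$, none equals $X$, so the requirement $X\notin\mathcal U$ is automatic.

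For $(2)\Rightarrow(1)$, given countable dense sets $D_i\subseteq B(X)$, for each triple $(i,m,k)$ and $g\in D_i$ I set $W(g,m,k)=\{x\in X : |g(x)-h_m(x)|<1/k\}$ and
$$\mathcal W_{i,m,k}=\{W(g,m,k) : g\in D_i\}.$$
Each $W(g,m,k)$ is Borel, and density of $D_i$ shows $\mathcal W_{i,m,k}$ is a countable Borel $\omega$-cover, since a $g\in D_i\cap\langle h_m,K,1/k\rangle$ gives $K\subseteq W(g,m,k)$ (removing any element equal to $X$ is a routine adjustment that preserves this). I then fix a partition of $\mathbb N$ into infinite blocks $\{B_{m,k} : (m,k)\in\mathbb N\times\mathbb N\}$ and, for each $(m,k)$, apply $S_1(B_\Omega,B_\Omega)$ to the sequence $(\mathcal W_{i,m,k} : i\in B_{m,k})$, obtaining $g_i\in D_i$ $(i\in B_{m,k})$ with $\{W(g_i,m,k) : i\in B_{m,k}\}$ a Borel $\omega$-cover. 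This defines $g_i$ for every $i\in\mathbb N$. To see $\{g_i : i\in\mathbb N\}$ is dense it suffices to meet every $\langle h_m,K,1/k\rangle$ (these form a base, as $S$ is dense and $1/k\to 0$), and for each such neighbourhood the block $B_{m,k}$ supplies $i$ with $K\subseteq W(g_i,m,k)$, i.e. $g_i\in\langle h_m,K,1/k\rangle$.

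The main obstacle is the bookkeeping in $(2)\Rightarrow(1)$: a single selected function $g_i$ must witness closeness for \emph{all} targets at once, whereas a set $W(g,m,k)$ records only one target $(m,k)$. Splitting $\mathbb N$ into one block per target, and reducing density to the countable base $\{\langle h_m,K,1/k\rangle\}$, is precisely what resolves this. The remaining points — that all intermediate functions and sets stay Borel, and that no produced cover is forced to contain $X$ — are routine.
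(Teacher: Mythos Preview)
Your argument is correct in both directions, but the route you take in $(1)\Rightarrow(2)$ is genuinely different from the paper's. The paper does \emph{not} build dense sets directly from the Borel $\omega$-covers; instead it retopologizes $X$: given the covers $\{\mathcal{B}_i\}$, it generates a finer separable metrizable topology $\tau$ (via a diagonal map into $2^\omega$) in which every $W^j_i$ becomes clopen, so that each $\mathcal{B}_i$ is an \emph{open} $\omega$-cover of $Z=(X,\tau)$. Since $B(Z)$ (and hence $C_p(Z)$) sits densely in $B(X)$, the hypothesis $S_1(\mathcal{D},\mathcal{D})$ descends to $C_p(Z)$, and then Scheepers' Theorem~\ref{th21} supplies the selection. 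Your approach avoids this reduction entirely: you translate covers to dense sets by the gluing construction $f\upharpoonright U^n_i=h_m$, $f\upharpoonright(X\setminus U^n_i)=1$, and read the $\omega$-cover back off from density at $\langle\mathbf 0,K,\frac12\rangle$. This is more elementary and self-contained --- it does not invoke Theorem~\ref{th21} at all --- while the paper's argument has the conceptual virtue of exhibiting the Borel statement as a formal consequence of the continuous one.

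For $(2)\Rightarrow(1)$ the two proofs are essentially the same idea with different bookkeeping. The paper first isolates the lemma ``for each fixed target $f$ and sequence $\{D_k\}$ one can pick $f_k\in D_k$ with $f\in\overline{\{f_k\}}$'' (via the covers $W^k_i=\{x:|f^k_i(x)|<1/k\}$) and then applies it to each $d_n$ in a fixed dense set after re-indexing $\{D_i\}$ as $\{D_{n,m}\}$. You instead partition $\mathbb N$ into blocks $B_{m,k}$, one per basic neighbourhood $\langle h_m,K,1/k\rangle$, and apply $S_1(B_\Omega,B_\Omega)$ on each block. Both arrive at the same place; your organization is slightly more direct, while the paper's makes the intermediate ``countable fan tightness'' statement explicit. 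Your handling of the $W(g,m,k)=X$ nuisance case is indeed routine, just as in the paper.
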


\begin{proof} $(1)\Rightarrow(2)$. Let $X$ be a set of reals
satisfying the hypotheses and $\beta$ be a countable base of $X$.
Consider a sequence $\{\mathcal{B}_i\}_{i\in \mathbb{N}}$ of
countable Borel $\omega$-covers of $X$ where
$\mathcal{B}_i=\{W^{j}_{i}\}_{j\in \mathbb{N}}$ for each $i\in
\mathbb{N}$.

Consider a topology $\tau$ generated by the family
$\mathcal{P}=\{W^{j}_{i}\cap A: i,j\in \mathbb{N}$ and $A\in
\beta\}\bigcup \{(X\setminus W^{j}_{i})\cap A: i,j\in \mathbb{N}$
and $A\in \beta\} $.

Note that if $\chi_{P}$ is  a characteristic function of $P$ for
each $P\in \mathcal{P}$, then a diagonal mapping
$\varphi=\Delta_{P\in \mathcal{P}} \chi_{P} : X\mapsto 2^{\omega}$
is a Borel bijection. Let $Z=\varphi(X)$.

Note that
 $\{\mathcal{B}_i\}$ is countable open $\omega$-cover of $Z$
 for each $i\in \mathbb{N}$. Since $B(Z)$ is a dense subset of $B(X)$, then
  $B(Z)$ also has the property $S_{1}(\mathcal{D},\mathcal{D})$. Since
  $C_p(Z)$ is a dense subset of $B(Z)$, $C_p(Z)$ has the property $S_{1}(\mathcal{D},\mathcal{D})$, too.

  By Theorem \ref{th21}, the space $Z$ has the property $S_{1}(\Omega,
  \Omega)$. It follows that there is a sequence $\{W^{j(i)}_{i}\}_{i\in
  \mathbb{N}}$ such that $W^{j(i)}_{i}\in \mathcal{B}_i$ and $\{W^{j(i)}_{i} : i\in
  \mathbb{N}\}$ is an open $\omega$-cover of $Z$. It follows that $\{W^{j(i)}_{i} : i\in
  \mathbb{N}\}$ is Borel $\omega$-cover of $X$.

$(2)\Rightarrow(1)$. Assume that $X$ has the property
$S_{1}(B_{\Omega}, B_{\Omega})$. Let $\{D_k\}_{k\in \mathbb{N}}$
be a sequence countable dense subsets of $B(X)$ and $D_k=\{f^k_i :
i\in \mathbb{N}\}$ for each $k\in \mathbb{N}$. We claim that for
any $f\in B(X)$ there is a sequence $\{f_k\}\subset B(X)$ such
that $f_k\in D_k$ for each $k\in \mathbb{N}$ and $f\in
\overline{\{f_k : k\in \mathbb{N}\}}$. Without loss
 of generality we can assume $f=\bf{0}$. For each $f^k_i\in D_k$
 let $W^{k}_i=\{ x\in X
 : -\frac{1}{k}<f^k_i(x)<\frac{1}{k} \}$.

If for each $j\in \mathbb{N}$ there is $k(j)$ such that
$W^{k(j)}_{i(j)}=X$, then a sequence $f_{k(j)}=f^{k(j)}_{i(j)}$
uniformly converge to $f$ and, hence, $f\in \overline{\{f_{k(j)} :
j\in \mathbb{N}\}}$.

We can assume that $W^{k}_i\neq X$ for any $k,i\in \mathbb{N}$.

(a). $\{W^{k}_i\}_{i\in
 \mathbb{N}}$ a sequence of Borel sets of $X$.

(b). For each $k\in \mathbb{N}$, $\{W^{k}_i : i\in
 \mathbb{N}\}$ is a $\omega$-cover of $X$.

By (2),  $X$ has the property $S_{1}(B_{\Omega}, B_{\Omega})$,
hence, there is a sequence $\{W^{k}_{i(k)}\}_{k\in \mathbb{N}}$
such that $W^{k}_{i(k)}\in \{W^{k}_i\}_{i\in
 \mathbb{N}}$  for each $k\in \mathbb{N}$ and $\{W^{k}_{i(k)}\}_{k\in \mathbb{N}}
 $ is a $\omega$-cover of $X$.

 Consider $\{f^{k}_{i(k)}\}$. We claim that $f\in \overline{\{f^{k}_{i(k)} : k\in
 \mathbb{N}\}}$.
Let $K$ be a finite subset of $X$, $\epsilon>0$ and $U=\langle f,
K, \epsilon \rangle$ be a base neighborhood of $f$, then there is
$k_0\in \mathbb{N}$ such that $\frac{1}{k_0}<\epsilon$ and
$K\subset W^{k_0}_{i(k_0)}$. It follows that $f^{k_0}_{i(k_0)}\in
U$.

Let $D=\{d_n: n\in \mathbb{N} \}$ be a dense subspace of $B(X)$.
Given a sequence $\{D_i\}_{i\in \mathbb{N}}$ of dense subspace of
$B(X)$, enumerate it as $\{D_{n,m}: n,m \in \mathbb{N} \}$. For
each $n\in \mathbb{N}$, pick $d_{n,m}\in D_{n,m}$ so that $d_n\in
\overline{\{d_{n,m}: m\in \mathbb{N}\}}$. Then $\{d_{n,m}: m,n\in
\mathbb{N}\}$ is dense in $B(X)$.

\end{proof}

In \cite{sch} (Theorem 35) and \cite{bbm2} (Corollary 2.10)
proved the following result

\begin{theorem}\label{th28} (Scheepers) For $X$ a separable metric space, the
following are equivalent:

\begin{enumerate}

\item $C_p(X)$ satisfies $S_{fin}(\mathcal{D},\mathcal{D})$;

\item $X$ satisfies $S_{fin}(\Omega, \Omega)$.

\end{enumerate}

\end{theorem}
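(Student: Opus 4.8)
The plan is to establish the two implications separately, following the pattern of the proof of Theorem \ref{th22} but replacing Borel $\omega$-covers by open $\omega$-covers, Borel functions by continuous ones, and the single selection of $S_1$ by the finite selection of $S_{fin}$. The underlying dictionary is the Scheepers correspondence between countable dense subsets of $C_p(X)$ and countable open $\omega$-covers of $X$: a finite $K\subset X$ with a tolerance $\epsilon>0$ corresponds to an open set $\{x:|g(x)|<\epsilon\}$, and density of a family near the zero function $\mathbf 0$ translates into the $\omega$-cover property of the associated family of such sets. Since $X$ is separable metric, $C_p(X)$ is separable, so as in the last paragraph of the proof of Theorem \ref{th22} it suffices, for a fixed countable dense $D=\{d_n:n\in\mathbb N\}$ and any sequence $(E_m)$ of dense sets, to select finite $G_m\subset E_m$ with a prescribed $d_n\in\overline{\bigcup_m G_m}$; reindexing then yields the full $S_{fin}(\mathcal D,\mathcal D)$, and by translation we may take the target to be $\mathbf 0$.

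For $(2)\Rightarrow(1)$, assume $X$ satisfies $S_{fin}(\Omega,\Omega)$ and let $(E_k)$ be dense in $C_p(X)$. For each $k$ and each $g\in E_k$ put $W^k_g=\{x\in X:|g(x)|<\frac{1}{k}\}$; since $E_k$ is dense, $\{W^k_g:g\in E_k\}$ is an open $\omega$-cover of $X$ (a member equal to $X$ is disposed of as uniform convergence, exactly as the case $W^m_i=X$ in Theorem \ref{th22}). Applying $S_{fin}(\Omega,\Omega)$ to this sequence produces finite $\mathcal F_k$ with $\mathcal W=\bigcup_k\mathcal F_k\in\Omega$; let $G_k\subset E_k$ be the corresponding finite set of functions. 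To see $\mathbf 0\in\overline{\bigcup_k G_k}$, fix $\langle\mathbf 0,K,\epsilon\rangle$ and invoke the fact that an $\omega$-cover without a finite subcover contains every finite set in infinitely many of its members: since each $\mathcal F_k$ is finite, those members come from infinitely many indices $k$, so some witness has scale $\frac{1}{k}<\epsilon$, giving $g\in G_k$ with $|g|<\epsilon$ on $K$. This coordination of approximation scale with cover index—together with ruling out (or separately treating) the degenerate case in which $\mathcal W$ admits a finite subcover—is the one delicate logical point, and it is exactly where finiteness of the selected sets is used; it is the same point that underlies the final line of the proof of $(2)\Rightarrow(1)$ in Theorem \ref{th22}.

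For $(1)\Rightarrow(2)$, assume $C_p(X)$ satisfies $S_{fin}(\mathcal D,\mathcal D)$ and let $\mathcal U_n=\{U^n_j:j\in\mathbb N\}$ be open $\omega$-covers of $X$. For each $n$ set $D_n=\{f\in C(X):\{x:f(x)\neq 0\}\subseteq U^n_{j_1}\cup\dots\cup U^n_{j_r}\ \text{for some finite subfamily}\}$. Density of $D_n$ is where metrizability enters: given $h\in C(X)$, finite $K$ and $\epsilon>0$, the $\omega$-cover property yields a single $U^n_{j_0}\supseteq K$, and a Urysohn/Tietze bump $\rho$ (continuous, equal to $1$ near $K$, supported in $U^n_{j_0}$) makes $f=h\cdot\rho\in D_n$ agree with $h$ on $K$. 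Applying $S_{fin}(\mathcal D,\mathcal D)$ gives finite $F_n\subset D_n$ with $\bigcup_n F_n$ dense; let $\mathcal V_n\subset\mathcal U_n$ collect the finitely many members used by functions in $F_n$. Testing density against the constant function $\mathbf 1$ with $\epsilon=\frac{1}{2}$: for any finite $K$ there are $n$ and $f\in F_n$ with $f>\frac{1}{2}$ on $K$, so $K\subseteq\{x:f(x)\neq 0\}$ lies in a finite union of members of $\mathcal V_n$. Hence $\bigcup_n\mathcal V_n$ is an $\omega$-cover, as required.

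I expect the main obstacle to be the $(1)\Rightarrow(2)$ encoding, because the characteristic-function diagonal embedding used in Theorem \ref{th22} is unavailable here—characteristic functions of open sets are not continuous—so it must be replaced by Urysohn bump functions, which is what makes essential use of the metric structure, and the dense sets $D_n$ must be arranged so that density of the selected union of functions decodes \emph{exactly} into the $\omega$-cover condition. The scale–index coordination in $(2)\Rightarrow(1)$, including the finite-subcover degenerate case, is the secondary subtlety.
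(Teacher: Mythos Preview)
The paper does not give its own proof of this theorem; it is quoted as a known result of Scheepers (\cite{sch}, Theorem~35; see also \cite{bbm2}, Corollary~2.10). Your outline follows the standard dictionary and the $(2)\Rightarrow(1)$ direction is fine, but the $(1)\Rightarrow(2)$ direction contains two gaps.

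The more serious one: you allow each $f\in D_n$ to have its cozero set contained in a \emph{finite union} $U^n_{j_1}\cup\dots\cup U^n_{j_r}$. Your decoding step then only yields that every finite $K$ lies in a finite union of members of $\bigcup_n\mathcal V_n$; this is strictly weaker than $\bigcup_n\mathcal V_n\in\Omega$, which requires $K$ to lie in a \emph{single} member, so the final ``Hence'' is unjustified. The repair is already implicit in your own density argument, which produces a bump $\rho$ supported in a \emph{single} $U^n_{j_0}$: define instead $D_n=\{f\in C(X):\{x:f(x)\neq0\}\subseteq U^n_j\ \text{for some }j\}$. Then each $f\in F_n$ singles out one $U^n_{j(f)}\in\mathcal U_n$, set $\mathcal V_n=\{U^n_{j(f)}:f\in F_n\}$, and testing density of $\bigcup_n F_n$ against $\mathbf 1$ with $\epsilon=\tfrac12$ gives $K\subseteq\{x:f(x)\neq0\}\subseteq U^n_{j(f)}\in\mathcal V_n$ directly.

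The minor one: in this paper $\mathcal D$ denotes the family of \emph{countable} dense subsets, whereas your $D_n$ is uncountable, so $S_{fin}(\mathcal D,\mathcal D)$ cannot be applied to $(D_n)$ as stated. Since $X$ is separable metric, $C_p(X)$ has countable network weight and is therefore hereditarily separable; replace each $D_n$ by a countable subset still dense in $C_p(X)$ before invoking the selection principle. With these two adjustments the argument goes through.
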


Then for the space $B(X)$ we have analogous result.

\begin{theorem}\label{th24} For a set of reals $X$, the
following are equivalent:

\begin{enumerate}

\item $B(X)$ satisfies $S_{fin}(\mathcal{D},\mathcal{D})$;

\item $X$ satisfies $S_{fin}(B_\Omega, B_\Omega)$.

\end{enumerate}

\end{theorem}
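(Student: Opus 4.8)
The plan is to mirror the structure of the proof of Theorem~\ref{th22} almost verbatim, replacing the single-selection principle $S_1$ by the finite-selection principle $S_{fin}$ throughout, and to lean on Scheepers' Theorem~\ref{th28} at the crucial place where Theorem~\ref{th21} was used before. The overall architecture consists of two implications, $(1)\Rightarrow(2)$ and $(2)\Rightarrow(1)$, each adapted from the corresponding half of Theorem~\ref{th22}.

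For $(1)\Rightarrow(2)$, I would start with a sequence $\{\mathcal{B}_i\}_{i\in\mathbb{N}}$ of countable Borel $\omega$-covers of $X$, writing $\mathcal{B}_i=\{W^j_i\}_{j\in\mathbb{N}}$. As in Theorem~\ref{th22}, I would generate a topology $\tau$ from the family $\mathcal{P}=\{W^j_i\cap A : i,j\in\mathbb{N},\ A\in\beta\}\cup\{(X\setminus W^j_i)\cap A : i,j\in\mathbb{N},\ A\in\beta\}$, where $\beta$ is a countable base of $X$, and pass to the Borel-bijective image $Z=\varphi(X)$ under the diagonal of characteristic functions $\varphi=\Delta_{P\in\mathcal{P}}\chi_P : X\mapsto 2^{\omega}$. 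Since $B(Z)$ is dense in $B(X)$ and $C_p(Z)$ is dense in $B(Z)$, the property $S_{fin}(\mathcal{D},\mathcal{D})$ passes down to $C_p(Z)$. Then Theorem~\ref{th28} yields that $Z$ satisfies $S_{fin}(\Omega,\Omega)$, so from each $\mathcal{B}_i$ I can extract a \emph{finite} subset $\mathcal{G}_i\subseteq\mathcal{B}_i$ with $\bigcup_{i\in\mathbb{N}}\mathcal{G}_i$ an open $\omega$-cover of $Z$; since $\varphi$ is a Borel bijection this union is a Borel $\omega$-cover of $X$, giving $S_{fin}(B_\Omega,B_\Omega)$.

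For $(2)\Rightarrow(1)$, I would follow the second half of Theorem~\ref{th22}. Given a sequence $\{D_k\}$ of countable dense subsets of $B(X)$ with $D_k=\{f^k_i : i\in\mathbb{N}\}$, and a target $f=\mathbf{0}$, I set $W^k_i=\{x\in X : -\tfrac{1}{k}<f^k_i(x)<\tfrac{1}{k}\}$. After disposing of the trivial case where some $W^{k(j)}_{i(j)}=X$, each $\{W^k_i\}_{i\in\mathbb{N}}$ is a Borel $\omega$-cover of $X$. Applying $S_{fin}(B_\Omega,B_\Omega)$ produces finite sets so that their union is a Borel $\omega$-cover; the corresponding finite collections of functions $f^k_i$ then accumulate at $\mathbf{0}$, since any finite $K\subset X$ and $\epsilon>0$ are captured by some chosen $W^{k_0}_{i}$ with $\tfrac{1}{k_0}<\epsilon$. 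The final diagonalization (enumerating a given sequence of dense sets as $\{D_{n,m}\}$ and picking $d_{n,m}\in D_{n,m}$) is identical to Theorem~\ref{th22}, except that one now selects finite subsets rather than single points, which is automatically compatible with $S_{fin}(\mathcal{D},\mathcal{D})$.

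The main obstacle I anticipate is verifying cleanly that the finite-selection data transfer correctly across the Borel bijection $\varphi$ in $(1)\Rightarrow(2)$: one must check that a finite union of open $\omega$-cover members of $Z$ pulls back to a Borel $\omega$-cover of $X$, and that the densities of $C_p(Z)$ and $B(Z)$ inside $B(X)$ genuinely inherit the finitary property $S_{fin}(\mathcal{D},\mathcal{D})$. This is precisely where the proof of Theorem~\ref{th22} invoked density preservation, and the same reasoning applies since $S_{fin}(\mathcal{D},\mathcal{D})$ is inherited by dense subspaces; still, this bookkeeping is the delicate part, whereas the cover-to-function translation in $(2)\Rightarrow(1)$ is routine once the $W^k_i$ are recognized as $\omega$-covers.
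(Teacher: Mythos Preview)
Your proposal is correct and is exactly the approach the paper intends: the paper's own proof of Theorem~\ref{th24} consists of the single line ``It is proved similarly to the proof of Theorem~\ref{th22},'' and you have spelled out precisely that adaptation, substituting $S_{fin}$ for $S_1$ and invoking Theorem~\ref{th28} in place of Theorem~\ref{th21}. The bookkeeping you flag (inheritance of $S_{fin}(\mathcal{D},\mathcal{D})$ by dense subspaces, and transfer of the $\omega$-cover across the Borel bijection) is handled in the paper exactly as in Theorem~\ref{th22} and poses no new difficulty.
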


\begin{proof} It is proved similarly to the proof of
Theorem \ref{th22}.
\end{proof}

\section{Question of A. Bella, M. Bonanzinga and M. Matveev}

In \cite{bbm}, Question 4.3, it is asked to find a sequentially
separable selectively separable space which is not selective
sequentially separable.

\medskip

The following theorem answers this question.

\begin{theorem} $(CH)$ There is a consistent example of a space $Z$, such
that  $Z$ is sequentially separable selectively separable, not
selective sequentially separable.
\end{theorem}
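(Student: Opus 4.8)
The plan is to leverage the characterizations established earlier in the paper (especially Theorem \ref{th1} and Theorem \ref{th22}) together with the three small cardinals $\mathfrak{p}$, $\mathfrak{b}$, $\mathfrak{q}$ to construct the separating example as a space of the form $B(X)$ for a carefully chosen set of reals $X$. The key observation is that, by Theorem \ref{th1}, $B(X)$ being \emph{selective sequentially separable} is controlled by the covering property $S_{1}(B_{\Gamma},B_{\Gamma})$ of $X$, while by Theorem \ref{th22} the \emph{selective separability} of $B(X)$ (under the $S_1$ version) is controlled by $S_{1}(B_{\Omega},B_{\Omega})$. Since $S_{1}(B_{\Gamma},B_{\Gamma})$ is strictly stronger than $S_{1}(B_{\Omega},B_{\Omega})$, the strategy is to find $X$ satisfying the $\omega$-cover property but failing the $\gamma$-cover property, under a suitable consistent hypothesis (the abstract signals $CH$).

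First I would fix $X$ to be a set of reals of the appropriate cardinality so that $X$ carries $S_{1}(B_{\Omega},B_{\Omega})$ but not $S_{1}(B_{\Gamma},B_{\Gamma})$; under $CH$ such a set can be built by a transfinite recursion of length $\omega_1$, diagonalizing against all countable Borel $\omega$-covers to ensure the selection property while simultaneously planting a fixed Borel $\gamma$-cover with no $\gamma$-subcover witness to destroy $S_{1}(B_{\Gamma},B_{\Gamma})$. Then I would set $Z=B(X)$. By Theorem \ref{th22}, $Z$ satisfies $S_{1}(\mathcal{D},\mathcal{D})$, hence is selectively separable; separability of $B(X)$ for a set of reals follows from the standard fact that $C_p(X)$, and a fortiori $B(X)$, is separable. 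To obtain sequential separability of $Z$ one invokes Velichko's theorem as used in the $(5)\Rightarrow(6)$ part of Theorem \ref{th1}, giving a countable sequentially dense set. Finally, by Theorem \ref{th1} the failure of $S_{1}(B_{\Gamma},B_{\Gamma})$ for $X$ translates precisely into the failure of $S_{1}(\mathcal{S},\mathcal{S})$ for $B(X)$, so $Z$ is not selective sequentially separable.

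The main obstacle I anticipate is the recursive construction of $X$: one must guarantee that the $\omega$-cover selection $S_{1}(B_{\Omega},B_{\Omega})$ survives through all $\omega_1$ stages without accidentally forcing the $\gamma$-version as well. Concretely, at each stage one handles a single countable Borel $\omega$-cover by selecting one member and keeping its complement behavior controlled, while the fixed ``bad'' $\gamma$-cover must remain un-thinnable to a $\gamma$-subfamily relative to the points already committed. Balancing these two competing demands—adding points fast enough to meet every $\omega$-cover selection requirement, yet slowly enough that the bad $\gamma$-cover keeps failing—is the delicate combinatorial heart of the argument, and is where the consistency hypothesis does its real work. The passage from $X$ to $B(X)$ is then essentially automatic given the two characterization theorems, so the verification that $Z$ has the three claimed properties reduces to citing Theorem \ref{th1}, Theorem \ref{th22}, and Velichko's theorem.
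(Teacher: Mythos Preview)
Your overall strategy---find a set of reals $X$ with $S_1(B_\Omega,B_\Omega)$ but without the $\gamma$-version, then pass to a function space over $X$---matches the paper's. However, there are two genuine gaps.

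First, the paper does \emph{not} take $Z=B(X)$; it takes $Z=C_p(X)$. This matters because your appeal to Velichko's theorem for the sequential separability of $B(X)$ is misplaced: Velichko gives sequential separability of $C_p(X)$ (and, as quoted in the $(5)\Rightarrow(6)$ step of Theorem~\ref{th1}, of $B_1(X)$), not of $B(X)$ in general. Indeed, sequential separability of $B(X)$ is not automatic for a set of reals, which is exactly why conditions (1) and (3) of Theorem~\ref{th1} carry the extra hypothesis ``and $B(X)$ is sequentially separable''. Without an independent argument that your $X$ makes $B(X)$ sequentially separable, both the ``sequentially separable'' clause of the theorem and your use of Theorem~\ref{th1} to deduce failure of $S_1(\mathcal{S},\mathcal{S})$ are unjustified. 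The paper sidesteps this by working in $C_p(X)$, invoking Scheepers' Theorem~\ref{th21} (via $S_1(\Omega,\Omega)$) for selective separability and a separate $C_p$-result (Theorem~8.11 of \cite{os2}, through the failure of $U_{fin}(\Gamma,\Gamma)=U_{fin}(\Gamma_F,\Gamma)$) for the failure of $S_{fin}(\mathcal{S},\mathcal{S})$.

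Second, your construction of $X$ is only a sketch, and the sketch itself is problematic. The paper does not build $X$ by recursion at all; it quotes the $\mathfrak{c}$-Lusin set of Scheepers--Tsaban (Theorem~40 and Corollary~41 in \cite{scts}), which under $CH$ is already known to satisfy $S_1(B_\Omega,B_\Omega)$ while failing $U_{fin}(\Gamma,\Gamma)$. Your proposed diagonalization---``handle a single countable Borel $\omega$-cover'' at each stage while keeping a fixed bad $\gamma$-cover unthinnable---does not obviously secure $S_1(B_\Omega,B_\Omega)$, which is a statement about \emph{sequences} of $\omega$-covers rather than single covers, and you give no bookkeeping for that. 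So the ``delicate combinatorial heart'' you flag is not merely delicate; as written it is missing.
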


\begin{proof} By Theorem 40 and  Corollary 41  in \cite{scts}, there is a $\mathfrak{c}$-Lusin set $X$ which has the property
$S_1(B_{\Omega},B_{\Omega})$, but $X$ does not have the property
$U_{fin}(\Gamma,\Gamma)$.

Consider a space $Z=C_p(X)$.

 By Velichko's Theorem (\cite{vel}), a space $C_p(X)$
is sequentially separable for any separable metric space $X$.

(a). $Z$ is sequentially separable.

Since $X$ is Lindel$\ddot{o}$f and $X$ satisfies
$S_1(B_{\Omega},B_{\Omega})$, $X$ has the property
$S_1(\Omega,\Omega)$.

By Theorem \ref{th21}, $C_p(X)$ satisfies
$S_{1}(\mathcal{D},\mathcal{D})$, and, hence, $C_p(X)$ satisfies
$S_{fin}(\mathcal{D},\mathcal{D})$.

(b). $Z$ is selectively separable.

By Theorem 4.1 in \cite{os11},
$U_{fin}(\Gamma,\Gamma)=U_{fin}(\Gamma_F,\Gamma)$ for
Lindel$\ddot{o}$f spaces.

Since $X$ does not have the property $U_{fin}(\Gamma,\Gamma)$, $X$
does not have the property $S_{fin}(\Gamma_F,\Gamma)$. By Theorem
8.11 in \cite{os2}, $C_p(X)$ does not have the property
$S_{fin}(\mathcal{S},\mathcal{S})$.

(c). $Z$ is not selective sequentially separable.

\end{proof}

\begin{theorem} $(CH)$ There is a consistent example of a space $Z$, such
that  $Z$ is countable sequentially separable selectively
separable, not countable selective sequentially separable.
\end{theorem}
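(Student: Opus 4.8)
The plan is to mirror almost verbatim the construction of the previous theorem, replacing the continuous function space $C_p(X)$ by the Borel function space $B(X)$ and the three separation properties by their countable analogues. Concretely, I would again invoke Theorem~40 and Corollary~41 in~\cite{scts} to obtain a $\mathfrak{c}$-Lusin set $X$ enjoying $S_1(B_\Omega,B_\Omega)$ while failing $U_{fin}(\Gamma,\Gamma)$, and then set $Z=B(X)$. The point of switching to $B(X)$ is that the bridging theorems now available are exactly the Borel-cover characterizations proved earlier in this paper, namely Theorem~\ref{th22} (and its $S_{fin}$ companion Theorem~\ref{th24}) and Theorem~\ref{th1}, rather than Scheepers' classical $C_p$ theorems.

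For \emph{countable sequential separability} I would note that a set of reals is a separable metric space, so Velichko's Theorem (\cite{vel}) gives that $B_1(X)$, and hence $B(X)$ on a $\sigma$-set, is sequentially separable; since $X$ is a $\mathfrak{c}$-Lusin set it is a $\sigma$-set, so $B(X)=B_1(X)$ is sequentially separable and in particular countably so. For \emph{countable selective separability} the chain runs: from $S_1(B_\Omega,B_\Omega)$ (which trivially implies $S_{fin}(B_\Omega,B_\Omega)$) Theorem~\ref{th24} yields that $B(X)$ satisfies $S_{fin}(\mathcal{D},\mathcal{D})$, i.e.\ $B(X)$ is countably selectively separable. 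For the \emph{failure} of countable selective sequential separability, i.e.\ $B(X)\notin S_{fin}(\mathcal{S},\mathcal{S})$, I would use Theorem~\ref{th1}: that equivalence tells us $B(X)\in S_{fin}(\mathcal{S},\mathcal{S})$ together with sequential separability is equivalent to $X\in S_{fin}(B_\Gamma,B_\Gamma)$, which in turn (Theorem~1 of~\cite{scts}) equals $U_{fin}(B_\Gamma,B_\Gamma)$. Since on a Lindel\"of space $U_{fin}(\Gamma,\Gamma)=U_{fin}(\Gamma_F,\Gamma)$ by~\cite{os11} and $X$ fails $U_{fin}(\Gamma,\Gamma)$, and since every Borel $\gamma$-cover refines to the relevant closed/co-zero covers, one concludes $X\notin U_{fin}(B_\Gamma,B_\Gamma)$; Theorem~\ref{th1} then forces $B(X)\notin S_{fin}(\mathcal{S},\mathcal{S})$.

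I expect the main obstacle to be the final step, verifying that the failure of the \emph{open} principle $U_{fin}(\Gamma,\Gamma)$ on $X$ actually transfers to the failure of the \emph{Borel} principle $U_{fin}(B_\Gamma,B_\Gamma)$ that Theorem~\ref{th1} requires. The implication runs the ``wrong'' direction for the obvious refinement argument: Borel $\gamma$-covers are more plentiful than open ones, so failing $U_{fin}(\Gamma,\Gamma)$ does not immediately give failing $U_{fin}(B_\Gamma,B_\Gamma)$. The cleanest route, which I would take, is to route everything through the closed-cover principle $S_{fin}(\Gamma_F,\Gamma)$ exactly as in the preceding theorem: by~\cite{os11} the Lindel\"of hypothesis gives $U_{fin}(\Gamma,\Gamma)=U_{fin}(\Gamma_F,\Gamma)$, so $X$ fails $S_{fin}(\Gamma_F,\Gamma)$, and then by Theorem~8.11 in~\cite{os2} this is precisely equivalent to $C_p(X)$—and by the $\sigma$-set identity $B_1(X)=B(X)$ together with the structure exploited in Theorem~\ref{th1}, also $B(X)$—failing $S_{fin}(\mathcal{S},\mathcal{S})$. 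Thus the careful point is to justify the passage from $C_p(X)$ to $B(X)$ for this negative property, which I would argue by observing that $C_p(X)\subseteq B_1(X)=B(X)$ is sequentially dense on a $\sigma$-set, so a selection witnessing $S_{fin}(\mathcal{S},\mathcal{S})$ for $B(X)$ would restrict to one for $C_p(X)$, contradicting~\cite{os2}.

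\begin{proof}
By Theorem~40 and Corollary~41 in~\cite{scts}, there is a $\mathfrak{c}$-Lusin set $X$ which has the property $S_1(B_\Omega,B_\Omega)$ but does not have the property $U_{fin}(\Gamma,\Gamma)$. Consider the space $Z=B(X)$. Since $X$ is a $\mathfrak{c}$-Lusin set it is a $\sigma$-set, hence $B_1(X)=B(X)$.

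(a). $Z$ is countably sequentially separable. By Velichko's Theorem (\cite{vel}), $B_1(X)$ is sequentially separable for any separable metric space $X$, and since $B_1(X)=B(X)$ the space $Z$ is sequentially separable, in particular countably sequentially separable.

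(b). $Z$ is countably selectively separable. Since $S_1(B_\Omega,B_\Omega)$ implies $S_{fin}(B_\Omega,B_\Omega)$, by Theorem~\ref{th24} the space $B(X)$ satisfies $S_{fin}(\mathcal{D},\mathcal{D})$, i.e.\ $Z$ is countably selectively separable.

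(c). $Z$ is not countably selective sequentially separable. By Theorem~4.1 in~\cite{os11}, $U_{fin}(\Gamma,\Gamma)=U_{fin}(\Gamma_F,\Gamma)$ for Lindel\"of spaces, so $X$ does not have the property $S_{fin}(\Gamma_F,\Gamma)$. By Theorem~8.11 in~\cite{os2}, $C_p(X)$ does not have the property $S_{fin}(\mathcal{S},\mathcal{S})$. Since $C_p(X)\subseteq B_1(X)=B(X)$ is sequentially dense, any witness of $S_{fin}(\mathcal{S},\mathcal{S})$ for $B(X)$ would restrict to a witness for $C_p(X)$; hence $B(X)=Z$ does not have the property $S_{fin}(\mathcal{S},\mathcal{S})$, i.e.\ $Z$ is not countably selective sequentially separable.
\end{proof}
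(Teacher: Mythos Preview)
Your overall construction (the $\mathfrak{c}$-Lusin set $X$ from \cite{scts}) matches the paper, but part~(c) contains a genuine misconception that leads you into an unnecessary and unjustified detour. You write that ``failing $U_{fin}(\Gamma,\Gamma)$ does not immediately give failing $U_{fin}(B_\Gamma,B_\Gamma)$'' because Borel $\gamma$-covers are more plentiful. This has the monotonicity backwards: in $U_{fin}(\mathcal{A},\mathcal{B})$ (and $S_{fin}$), enlarging the \emph{first} coordinate makes the principle \emph{stronger}, since one must now handle more input sequences. Every open $\gamma$-cover is a Borel $\gamma$-cover, and finite unions of open sets are open, so $S_{fin}(B_\Gamma,B_\Gamma)\Rightarrow S_{fin}(\Gamma,\Gamma)\Rightarrow U_{fin}(\Gamma,\Gamma)$. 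Hence $\neg U_{fin}(\Gamma,\Gamma)\Rightarrow\neg S_{fin}(B_\Gamma,B_\Gamma)$ is immediate, and Theorem~\ref{th1} then gives $B_1(X)\notin S_{fin}(\mathcal{S},\mathcal{S})$ directly. This is exactly what the paper does, in one line.

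Your alternative route through $C_p(X)$ has a real gap at the last step. You assert that because $C_p(X)$ is sequentially dense in $B(X)$, a witness of $S_{fin}(\mathcal{S},\mathcal{S})$ for $B(X)$ ``would restrict'' to one for $C_p(X)$. For this you need that every countable sequentially dense subset of $C_p(X)$ is sequentially dense in $B(X)$; but sequential density of $C_p(X)$ in $B(X)$ only gives, for each $f\in B(X)$, a sequence in $C_p(X)$ converging to $f$, and then for each term a sequence in $D_n$ converging to it --- extracting a single sequence in $D_n$ converging to $f$ is precisely an $\alpha_2$-type diagonalization that you have not justified (and which, incidentally, is equivalent to the very property you are trying to disprove).

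A secondary issue: you set $Z=B(X)$ and claim $B_1(X)=B(X)$ because a $\mathfrak{c}$-Lusin set is a $\sigma$-set. This is not argued, and in fact the implication ``$X$ is a $\sigma$-set'' used elsewhere in the paper is deduced from $S_1(B_\Gamma,B_\Gamma)$ (via \cite{tszd}), a property your $X$ explicitly fails. The paper sidesteps the question entirely by taking $Z=B_1(X)$; Velichko's theorem gives sequential separability of $B_1(X)$ without any $\sigma$-set hypothesis, part~(b) follows since $B_1(X)$ is dense in $B(X)$ and $S_1(\mathcal{D},\mathcal{D})$ passes to dense subspaces, and part~(c) uses item~(7) of Theorem~\ref{th1}.
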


\begin{proof} Consider the $\mathfrak{c}$-Lusin set $X$ (see Theorem 40 and  Corollary 41  in \cite{scts}), then $X$ has the property
$S_1(B_{\Omega},B_{\Omega})$, but $X$ does not have the property
$U_{fin}(\Gamma,\Gamma)$ and, hence, $X$ does not have the
property $S_{fin}(B_{\Gamma},B_{\Gamma})$.

Consider a space $Z=B_1(X)$.

 By Velichko's Theorem in \cite{vel}, a space $B_1(X)$
is sequentially separable for any separable metric space $X$.

(a). $Z$ is sequentially separable.

By Theorem \ref{th22}, $B(X)$ satisfies
$S_{1}(\mathcal{D},\mathcal{D})$. Since $Z$ is dense subset of
$B(X)$ we have that $Z$ satisfies $S_{1}(\mathcal{D},\mathcal{D})$
and, hence, $Z$ satisfies $S_{fin}(\mathcal{D},\mathcal{D})$.

(b). $Z$ is countable selectively separable.

Since $X$ does not have the property
$S_{fin}(B_{\Gamma},B_{\Gamma})$, by Theorem \ref{th1}, $B_1(X)$
does not have the property $S_{fin}(\mathcal{S},\mathcal{S})$.

(c). $Z$ is not countable selective sequentially separable.

\end{proof}

\section{Question of A. Bella and C.
Costantini.}

In \cite{bc}, Question 2.7, it is asked to find a compact $T_2$
sequentially separable space which is not selective sequentially
separable.

\medskip

The following theorem answers this question.

\begin{theorem}\label{th41} $(\mathfrak{b}<\mathfrak{q})$ There is a consistent example of a compact $T_2$
sequentially separable space which is not selective sequentially
separable.
\end{theorem}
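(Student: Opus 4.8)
The plan is to realize the example as a Cantor cube $Z = 2^{\mathfrak b}$ and to exploit the two inequalities packaged in the hypothesis $\mathfrak b < \mathfrak q$: on the one hand $\mathfrak b < \mathfrak q$ guarantees a $Q$-set of size $\mathfrak b$, which will deliver sequential separability; on the other hand the cardinal $\mathfrak b$ itself, being equal to $\min\{\mathfrak b,\mathfrak q\}$, is exactly the threshold past which the selective version must fail. First I would record that $Z$ is a compact $T_2$ space, and that it is separable, since $\mathfrak b < \mathfrak q \le \mathfrak c$ and $2^{\kappa}$ is separable for every $\kappa \le \mathfrak c$ by Hewitt--Marczewski--Pondiczery; sequential separability, established below, is in any case stronger. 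I would also note at the outset that $\mathfrak b < \mathfrak q$ is consistent with ZFC, so the construction is genuinely a consistency result.

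The key reduction for sequential separability is to transpose a candidate countable set $D = \{d_n : n \in \mathbb{N}\} \subseteq 2^{\mathfrak b}$ into its columns $x_\alpha = (d_n(\alpha))_{n\in\mathbb{N}} \in 2^{\mathbb{N}}$, $\alpha < \mathfrak b$. A short computation shows that $D$ is sequentially dense exactly when, for every target $f \in 2^{\mathfrak b}$, the family $\{\,\{n : x_\alpha(n) = f(\alpha)\} : \alpha < \mathfrak b\,\}$ admits an infinite pseudo-intersection $I$; enumerating $D$ along $I$ then produces a sequence converging to $f$. I would manufacture such a $D$ from a $Q$-set $X = \{x_\alpha : \alpha < \mathfrak b\}$ of reals of size $\mathfrak b$, available precisely because $\mathfrak b < \mathfrak q$. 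For a fixed $f$, the set $Y = \{x_\alpha : f(\alpha) = 1\}$ is, by the $Q$-set property, a relative $G_\delta$ (hence also $F_\sigma$) in $X$, and this two-sided Borel description of $Y$ is what lets one extract the single infinite $I$ that decides every coordinate the right way. Running this for all $f$ yields that $D$ is sequentially dense, so $Z$ is sequentially separable.

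For the failure of selective sequential separability I would use $\mathfrak b$ directly. Fix a $\le^{*}$-unbounded family $\{g_\xi : \xi < \mathfrak b\} \subseteq \mathbb{N}^{\mathbb{N}}$ and use it to index a block of coordinates of $2^{\mathfrak b}$. One then designs a sequence $(D_k : k \in \mathbb{N})$ of countable sequentially dense subsets so that convergence to the distinguished point of any sequence assembled from finite selections $F_k \subseteq D_k$ forces a single function in $\mathbb{N}^{\mathbb{N}}$ that eventually dominates every $g_\xi$: the point is that in $2^{\mathfrak b}$ the moment of convergence at coordinate $\xi$ is governed by $g_\xi$, and a sequence drawn from $\bigcup_k F_k$ can meet each $D_k$ in only finitely many points, so it would have to beat all the $g_\xi$ at once. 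Since $\{g_\xi\}$ is $\le^{*}$-unbounded, no such $\bigcup_k F_k$ can be sequentially dense, and $Z$ is not selective sequentially separable; this is the boundary case $\mathfrak b = \min\{\mathfrak b,\mathfrak q\}$ of the cardinal computation announced in the abstract.

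The main obstacle is matching the two threshold cardinals on the nose. One must verify that the $Q$-set really yields pseudo-intersections for \emph{every} branch, not merely the finite-intersection property of an independent family, so that sequential separability persists all the way up to $\mathfrak q$; and one must verify that the unbounded family genuinely obstructs \emph{every} finite-selection scheme, so that the selective version already collapses at $\mathfrak b$. Keeping these two arguments calibrated to the strict gap $\mathfrak b < \mathfrak q$, so that $\kappa = \mathfrak b$ lands inside the sequential-separability range while simultaneously sitting at the non-selective threshold, is the delicate point; the remaining verification that $2^{\mathfrak b}$ is compact Hausdorff is routine.
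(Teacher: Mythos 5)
Your choice of the space $Z=2^{\mathfrak b}$ and the division of labour between the two halves of the hypothesis are exactly right, and they match the paper: sequential separability of $2^{\mathfrak b}$ comes from the existence of a $Q$-set of size $\mathfrak b$ (this is Proposition~3 of Gartside--Lo--Marsh, which the paper simply cites; your column/pseudo-intersection sketch is the standard route to it and is acceptable as a sketch). The problem is the second half. You write that one ``designs a sequence $(D_k)$ of countable sequentially dense subsets so that convergence \dots forces a single function \dots that eventually dominates every $g_\xi$,'' but this design is the entire mathematical content of the theorem and you never carry it out. The difficulty you are skipping is twofold: each $D_k$ must be sequentially dense in \emph{all} of $2^{\mathfrak b}$ (a strong global requirement, not just convergence to one distinguished point), and the obstruction must defeat \emph{every} choice of finite $F_k\subseteq D_k$. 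Nothing in your sketch explains how an unbounded family yields sets with the first property while retaining the second; ``the moment of convergence at coordinate $\xi$ is governed by $g_\xi$'' is a slogan, not a construction.

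The paper fills this gap by a different, more structured route: it invokes the Scheepers--Tsaban computation $non(S_{fin}(B_\Gamma,B_\Gamma))=\mathfrak b$ to obtain a set of reals $X$ of size $\mathfrak b$ with a sequence of countable Borel $\gamma$-covers $(A_n)$ admitting no finite selections whose union is a $\gamma$-cover; it transfers these covers to a discrete set $D$ of size $\mathfrak b$, and for each $n$ builds $\mathcal S_n\subseteq 2^{\mathfrak b}=B(D,\{0,1\})$ by patching the members $h_i$ of one fixed countable sequentially dense set against the $i$-th element $C^i_n$ of the $n$-th cover (setting $f^i_n=h_i$ on $C^i_n$ and $1$ off it). The $\gamma$-cover property is precisely what makes each $\mathcal S_n$ sequentially dense, and any finite selections whose union were sequentially dense would, by extracting a subsequence converging to $\mathbf 0$, produce finite selections from the $A_n$ forming a $\gamma$-cover of $X$ --- the desired contradiction. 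If you want to avoid citing $non(S_{fin}(B_\Gamma,B_\Gamma))=\mathfrak b$ and argue directly from an unbounded family, you would in effect have to reprove that equality; either way, the patching construction (or an equivalent) must appear explicitly for the proof to stand.
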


\begin{proof}  Let $D$ be a discrete space of size
$\mathfrak{b}$. Since $\mathfrak{b}<\mathfrak{q}$, a space
$2^{\mathfrak{b}}$ is sequentially separable (see Proposition 3 in
\cite{glm}).

We claim that $2^{\mathfrak{b}}$ is not selective sequentially
separable.

On the contrary, suppose that $2^{\mathfrak{b}}$ is selective
sequentially separable. Since $non(S_{fin}(B_{\Gamma},
B_{\Gamma}))=\mathfrak{b}$ (see Theorem 1 and Theorem 27 in
\cite{scts}), there is a set of reals $X$ such that
$|X|=\mathfrak{b}$ and $X$ does not have the property
$S_{fin}(B_{\Gamma}, B_{\Gamma})$. Hence there exists  sequence
$(A_{n}: n\in \mathbb{N})$ of elements of $B_{\Gamma}$  that for
any sequence $(B_{n}: n\in\mathbb{N})$ of finite sets such that
for each $n$, $B_{n}\subseteq A_{n}$, we have that
$\bigcup_{n\in\mathbb{N}}B_{n}\notin B_{\Gamma}$.

 Consider an identity
mapping $id: D \mapsto X$ from the space $D$ onto the space $X$.
Denote $C^i_{n}=id^{-1}(A^i_n)$ for each $A^i_n\in A_n$ and
$n,i\in \mathbb{N}$. Let $C_n=\{C^i_{n}\}_{i\in \mathbb{N}}$ (i.e.
$C_n=id^{-1}(A_n)$) and let $\mathcal{S}=\{h_i\}_{i\in
\mathbb{N}}$ be a countable sequentially dense subset of
$B(D,\{0,1\})=2^{\mathfrak{b}}$.

For each $n\in \mathbb{N}$ we consider a countable sequentially
dense subset $\mathcal{S}_n$ of $B(D,\{0,1\})$  where

$\mathcal{S}_n =\{f^i_n\}:=\{ f^i_n\in B(D,2) :
f^i_n\upharpoonright C^{i}_n=h_i$ and $f^i_n\upharpoonright
(X\setminus C^{i}_n)=1$ for $i \in \mathbb{N} \}$.

Since $C_n=\{C^{i}_n\}_{i\in \mathbb{N}}$ is a Borel
$\gamma$-cover of $D$ and $\mathcal{S}$ is a countable
sequentially dense subset of $B(D,\{0,1\})$, we have that
$\mathcal{S}_n$ is a  countable sequentially dense subset of
$B(D,\{0,1\})$ for each $n\in \mathbb{N}$.

Indeed, let $h\in B(D,\{0,1\})$, there is a sequence
$\{h_s\}_{s\in \mathbb{N}}\subset \mathcal{S}$ such that
$\{h_s\}_{s\in \mathbb{N}}$ converge to $h$. We claim that
$\{f^s_n\}_{s\in \mathbb{N}}$ converge to $h$.  Let
$K=\{x_1,...,x_k\}$ be a finite subset of $D$,
$\epsilon=\{\epsilon_1,...,\epsilon_k\}$ where $\epsilon_j\in
\{0,1\}$ for $j=1,...,k$, and $W=\langle h, K,\epsilon
\rangle:=\{g\in B(D,\{0,1\}): |g(x_j)-h(x_j)|\in \epsilon_j$ for
$j=1,...,k\}$ be a base neighborhood of $h$, then there is a
number $m_0$ such that $K\subset C^{i}_n $ for $i>m_0$ and $h_s\in
W$ for $s>m_0$. Since $f^s_n\upharpoonright K= h_s\upharpoonright
K$ for each $s>m_0$, $f^s_n\in W$ for each $s>m_0$. It follows
that a sequence $\{f^s_n\}_{s\in \mathbb{N}}$ converge to $h$.

 Since $B(D,\{0,1\})$ is selective
sequentially separable, there is a sequence
$\{F_n=\{f^{i_1}_{n},..., f^{i_{s(n)}}_{n}\} : n\in\mathbb{N}\}$
such that for each $n$, $F_n \subset \mathcal{S}_n$, and
$\bigcup_{n\in\mathbb{N}}F_{n}$ is a countable sequentially dense
subset of $B(D,\{0,1\})$.

For $\bf{0}$ $\in B(D,\{0,1\})$ there is a sequence
$\{f^{i_{j}}_{n_j}\}_{j\in \mathbb{N}}\subset
\bigcup_{n\in\mathbb{N}}F_{n}$ such that $\{f^{i_j}_{n_j}\}_{j\in
\mathbb{N}}$ converge to $\bf{0}$. Consider a sequence
$\{C^{i_{j}}_{n_j}: j\in \mathbb{N}\}$.

(1) $C^{i_{j}}_{n_j}\in C_{n_j}$.

(2) $\{C^{i_{j}}_{n_j}: j\in \mathbb{N}\}$ is a $\gamma$-cover of
$D$.

Indeed, let $K$ be a finite subset of $D$ and $U=\langle {\bf 0} ,
K, \{0\} \rangle$ be a base neighborhood of ${\bf 0}$, then there
is a number $j_0$ such that $f^{i_{j}}_{n_j}\in U$ for every
$j>j_0$. It follows that $K\subset C^{i_{j}}_{n_j}$ for every
$j>j_0$. Hence, $\{A^{i_{j}}_{n_j}=id(C^{i_{j}}_{n_j}): j\in
\mathbb{N}\}\in B_{\Gamma}$ in the space $X$, a contradiction.

\end{proof}

Let $\mu=\min\{\kappa: 2^{\kappa}$ is not selective sequentially
separable$\}$. It is well-known that $\mathfrak{p}\leq\mu\leq
\mathfrak{q}$ (see \cite{bbm}).

\begin{theorem}
$\mu=\min\{\mathfrak{b},\mathfrak{q}\}$.
\end{theorem}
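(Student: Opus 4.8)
The plan is to prove the equality $\mu = \min\{\mathfrak{b}, \mathfrak{q}\}$ by establishing two inequalities, $\mu \le \min\{\mathfrak{b}, \mathfrak{q}\}$ and $\mu \ge \min\{\mathfrak{b}, \mathfrak{q}\}$. Recall $\mu = \min\{\kappa : 2^\kappa \text{ is not selective sequentially separable}\}$, so I must on one hand exhibit, for any $\kappa < \min\{\mathfrak{b}, \mathfrak{q}\}$, that $2^\kappa$ \emph{is} selective sequentially separable (giving $\mu \ge \min\{\mathfrak{b},\mathfrak{q}\}$), and on the other hand produce a single cardinal $\kappa = \min\{\mathfrak{b},\mathfrak{q}\}$ at which $2^\kappa$ \emph{fails} to be selective sequentially separable (giving $\mu \le \min\{\mathfrak{b},\mathfrak{q}\}$).

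For the upper bound $\mu \le \min\{\mathfrak{b},\mathfrak{q}\}$, I would split into two cases according to whether $\mathfrak{b} \le \mathfrak{q}$ or $\mathfrak{q} \le \mathfrak{b}$. If $\mathfrak{b} = \min\{\mathfrak{b},\mathfrak{q}\}$, then the argument of Theorem \ref{th41} already applies verbatim: using $non(S_{fin}(B_\Gamma, B_\Gamma)) = \mathfrak{b}$ (Theorem 1 and Theorem 27 in \cite{scts}) one finds a set of reals $X$ of size $\mathfrak{b}$ failing $S_{fin}(B_\Gamma, B_\Gamma)$, and the identity-map transfer construction shows $2^{\mathfrak{b}}$ is not selective sequentially separable. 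If instead $\mathfrak{q} = \min\{\mathfrak{b},\mathfrak{q}\}$, I would invoke the known bound $\mu \le \mathfrak{q}$ (from \cite{bbm}), which directly gives $\mu \le \mathfrak{q} = \min\{\mathfrak{b},\mathfrak{q}\}$. In either case $\mu \le \min\{\mathfrak{b},\mathfrak{q}\}$.

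For the lower bound $\mu \ge \min\{\mathfrak{b},\mathfrak{q}\}$, I must show that whenever $\kappa < \min\{\mathfrak{b},\mathfrak{q}\}$, the space $2^\kappa$ is selective sequentially separable. Here the hypothesis $\kappa < \mathfrak{q}$ guarantees (by definition of $\mathfrak{q}$) the existence of a $Q$-set of size $\kappa$, so a set of reals $X$ with $|X| = \kappa$ can be taken to be a $Q$-set, whence $X$ is a $\sigma$-set and every subset is $G_\delta$. The hypothesis $\kappa < \mathfrak{b}$ should then force $X$ to satisfy the covering property $S_{fin}(B_\Gamma, B_\Gamma)$, equivalently $S_1(B_\Gamma, B_\Gamma)$ by the coincidence in \cite{scts}, since $non(S_{fin}(B_\Gamma,B_\Gamma)) = \mathfrak{b}$ means no set of reals of size below $\mathfrak{b}$ can be a counterexample. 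Identifying $2^\kappa$ with $B(X,\{0,1\})$ (or embedding it densely into $B(X)$), I would then apply Theorem \ref{th1}: since $X$ has $S_1(B_\Gamma, B_\Gamma)$, the space $B(X)$ satisfies $S_1(\mathcal{S},\mathcal{S})$ and is sequentially separable, and the selective sequential separability of $2^\kappa$ follows from transferring this selection property to the Cantor cube.

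\textbf{The main obstacle} will be the lower bound, specifically reconciling the two combinatorial hypotheses into a single set of reals simultaneously witnessing both a covering property and a $\sigma$-set/$Q$-set structure, and then correctly transferring the selection principle $S_1(B_\Gamma, B_\Gamma)$ on $X$ to selective sequential separability of the Cantor cube $2^\kappa = B(X,\{0,1\})$ rather than of $B(X)$. The delicate point is that Theorem \ref{th1} is stated for $B(X)$ with values in $\mathbb{R}$, so I must verify that the construction in the $(2)\Rightarrow(1)$ direction — with its $\alpha_2$-space and $QN$-space machinery — descends to the two-valued function space, or alternatively argue that selective sequential separability of the dense subspace $2^\kappa \subset B(X)$ is inherited appropriately. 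I expect this transfer step to require the same $\gamma$-cover bookkeeping as in the proof of Theorem \ref{th41}, run in the reverse direction, and care must be taken that the enumeration of finite selections $F_n \subset \mathcal{S}_n$ produces a genuinely sequentially dense union in $2^\kappa$.
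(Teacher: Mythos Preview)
Your proposal is correct and follows essentially the same approach as the paper: the paper also handles the upper bound by combining the known inequality $\mu \le \mathfrak{q}$ with Theorem~\ref{th41} in the case $\mathfrak{b} < \mathfrak{q}$, and for the lower bound it likewise picks a $Q$-set $X$ of size $\kappa < \min\{\mathfrak{b},\mathfrak{q}\}$ and invokes the $(2)\Rightarrow(1)$ direction of Theorem~\ref{th1} (tacitly using $\kappa < \mathfrak{b}$ to obtain $S_1(B_\Gamma,B_\Gamma)$) to conclude that $2^\kappa = B(X,\{0,1\})$ is selective sequentially separable. If anything, you are more explicit than the paper about the transfer from $B(X)$ to the two-valued space $B(X,\{0,1\})$, which the paper dispatches with the single phrase ``Analogously proof of implication $(2)\Rightarrow(1)$''; your identification of this as the genuine technical step is accurate.
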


\begin{proof} Let $\kappa<\min\{\mathfrak{b},\mathfrak{q}\}$.
Then, by Proposition 3 in \cite{glm}, $2^{\kappa}$ is a
sequentially separable space.

Let $X$ be a set of reals such that $|X|=\kappa$ and  $X$ be a
$Q$-set.

Analogously proof of implication $(2)\Rightarrow(1)$ in Theorem
\ref{th1}, we can claim that $B(X,\{0,1\})=2^X=2^{\kappa}$ is
selective sequentially separable.

It follows that $\mu\geq \min\{\mathfrak{b},\mathfrak{q}\}$.

Since $\mu\leq \mathfrak{q}$ we suppose that $\mu>\mathfrak{b}$
and $\mathfrak{b}<\mathfrak{q}$. Then, by Theorem \ref{th41},
$2^{\mathfrak{b}}$ is not selective sequentially separable. It
follows that $\mu=\min\{\mathfrak{b},\mathfrak{q}\}$.
\end{proof}

In \cite{bbm}, Question 4.12 : is it the case $\mu\in
\{\mathfrak{p}, \mathfrak{q}\}$ ?

\medskip

A partial positive answer to this question is the existence of the
following models of set theory (Theorem 8 in \cite{bmz}):

1. $\mu=\mathfrak{p}=\mathfrak{b}<\mathfrak{q}$;

2.  $\mathfrak{p}<\mu=\mathfrak{b}=\mathfrak{q}$.

and

3. $\mu=\mathfrak{p}=\mathfrak{q}<\mathfrak{b}$.

\medskip
The author does not know whether, in general, the answer can be
negative. In this regard, the following question is of interest.

\medskip

{\bf Question.} Is there a model of set theory in which
$\mathfrak{p}<\mathfrak{b}<\mathfrak{q}$ ?


\bibliographystyle{model1a-num-names}
\bibliography{<your-bib-database>}







\end{document}